\crefname{theorem}{Theorem}{Theorems}
\crefname{thm}{Theorem}{Theorems}
\crefname{lemma}{Lemma}{Lemmas}
\crefname{lem}{Lemma}{Lemmas}
\crefname{remark}{Remark}{Remarks}
\crefname{prop}{Proposition}{Propositions}
\crefname{defn}{Definition}{Definitions}
\crefname{corollary}{Corollary}{Corollaries}
\crefname{conjecture}{Conjecture}{Conjectures}
\crefname{question}{Question}{Questions}
\crefname{chapter}{Chapter}{Chapters}
\crefname{section}{Section}{Sections}
\crefname{figure}{Figure}{Figures}
\theoremstyle{plain}
\newtheorem{thm}{Theorem}
\newtheorem*{thmb}{Theorem}
\newtheorem{lem}[thm]{Lemma}
\newtheorem{corollary}[thm]{Corollary}
\newtheorem*{conjectureb}{Conjecture}
\theoremstyle{definition}
\theoremstyle{remark}
\newtheorem*{remarkb}{Remark}
\numberwithin{equation}{section}
\renewcommand{\P}{\mathbb P}
\newcommand{\Z}{\mathbb Z}
\newcommand{\eps}{\varepsilon}
\author{Tom Hutchcroft}
\address{University of British Columbia}
\email{thutch@math.ubc.ca}
\title[Percolation on Quasi-Transitive Graphs of Exponential Growth]{Critical Percolation on Any Quasi-Transitive Graph of Exponential Growth Has No Infinite Clusters}
\begin{document}
\maketitle

\begin{abstract} We prove that critical percolation on any quasi-transitive graph of exponential volume growth does not have a unique infinite cluster. This allows us to deduce from earlier results that critical percolation on any graph in this class does not have any infinite clusters. The result is new when the graph in question is either amenable or nonunimodular. 
\end{abstract}

\section{Introduction}

In \textbf{Bernoulli bond percolation}, each edge of a graph $G=(V,E)$ (which we will always assume to be connected and locally finite) is either deleted or retained at random  with retention probability $p\in[0,1]$, independently of all other edges. We denote the random graph obtained this way by $G[p]$. Connected components of $G[p]$ are referred to as \textbf{clusters}. 
Given a graph $G$, the \textbf{critical probability}, denoted $p_c(G)$ or simply $p_c$, is defined to be 
\[p_c(G)= \sup \left\{p\in [0,1] : G[p] \text{ has no infinite clusters almost surely}\right\}. \]
 A central question concerns the existence or non-existence of infinite clusters at the critical probability $p=p_c$. Indeed, proving 
 that critical percolation on the hypercubic lattice $\Z^d$ has no infinite clusters for every
 $d\geq2$ is perhaps the best known open problem in modern probability theory. Russo \cite{russo1981critical} proved that critical percolation on the square lattice $\Z^2$ has no infinite clusters, while Hara and Slade \cite{hara1994mean}  proved that critical percolation on $\Z^d$  has no infinite clusters for all $d\geq 19$. More recently, Fitzner and van der Hofstad \cite{fitzner2015nearest} improved upon the Hara-Slade method, proving that critical percolation on $\Z^d$ has no infinite clusters for every $d\geq 11$.

In their highly influential paper \cite{bperc96}, Benjamini and Schramm proposed a systematic study of percolation on general \textbf{quasi-transitive} graphs; that is,  graphs $G=(V,E)$ such that the action of the automorphism group Aut$(G)$ on $V$ has only finitely many orbits (see e.g.\ \cite{LP:book} further background).  They made the following conjecture.

\begin{conjectureb}[Benjamini and Schramm]
Let $G$ be a quasi-transitive graph. If $p_c(G)<1$, then $G[p_c]$ has no infinite clusters almost surely. 
\end{conjectureb}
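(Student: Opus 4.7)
The plan is to split the conjecture into more manageable pieces using a standard two-step reduction: first show that at $p = p_c$ the infinite cluster, if one exists, cannot be unique; then combine this with existing results to conclude that no infinite cluster exists at all. This reduction is the organizing principle because non-uniqueness at $p_c$ is a strictly weaker property than absence of infinite clusters, and the gap between them has been systematically closed by prior work in essentially every regime except the ``non-uniqueness at $p_c$'' step itself.

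\textbf{Step 1: reducing to non-uniqueness at $p_c$.} The passage from non-uniqueness to absence of any infinite cluster is known in each regime. In the amenable case, the Burton--Keane theorem forces any infinite cluster to be unique, so non-uniqueness at $p_c$ immediately rules out the existence of infinite clusters. In the nonamenable unimodular case, results of Benjamini, Lyons, Peres and Schramm (together with the Lyons--Schramm indistinguishability theorem) show that non-uniqueness at $p_c$ forces $\theta(p_c) = 0$. The nonamenable nonunimodular case has been handled by extensions of these ideas due to Tim\'ar and the author. Combining these, it suffices to establish that $G[p_c]$ does not have a unique infinite cluster almost surely for every quasi-transitive graph with $p_c < 1$.

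\textbf{Step 2: non-uniqueness at $p_c$.} By Trofimov's theorem extending Gromov, every quasi-transitive graph has either polynomial volume growth (in which case it is quasi-isometric to a Cayley graph of a virtually nilpotent group) or exponential volume growth. For exponential growth graphs, the non-uniqueness step is precisely the main content of this paper and would be proved via a magnetization-type argument exploiting the exponential growth of balls. For polynomial growth graphs, we are reduced to Cayley graphs of virtually nilpotent groups; the conjecture is known for $\Z^2$ via Harris--Kesten--Russo planar duality, and for $\Z^d$ with $d \geq 11$ via the Hara--Slade lace expansion as refined by Fitzner--van der Hofstad, both of which directly yield $\theta(p_c) = 0$.

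\textbf{Main obstacle.} The hard part is the polynomial growth case outside the known settings, most notably $\Z^d$ for $3 \leq d \leq 10$ and general virtually nilpotent Cayley graphs that are neither planar nor high-dimensional lattices. The exponential growth technology fails for want of volume growth; planar duality is unavailable beyond $d = 2$; and the lace expansion requires a small parameter that only emerges in high dimensions. A proof in full generality would therefore require a genuinely new tool bridging these regimes, which is essentially equivalent to the celebrated open problem of critical percolation on $\Z^d$ in moderate dimensions. For this reason the proposal above can at best resolve the conjecture modulo this central open problem, and a complete proof of the Benjamini--Schramm conjecture as stated appears out of reach with current methods.
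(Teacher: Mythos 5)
The statement you were asked to prove is the Benjamini--Schramm conjecture itself, which remains open; the paper does not prove it and only establishes the special case of quasi-transitive graphs of exponential growth (its Theorem 1). Your assessment is therefore correct, and your Step 1 reduction is exactly the paper's strategy for that special case: combine Newman--Schulman, Burton--Keane/Gandolfi--Keane--Newman in the amenable case, Benjamini--Lyons--Peres--Schramm in the nonamenable unimodular case, and Tim\'ar in the nonunimodular case, so that it suffices to rule out a \emph{unique} infinite cluster at $p_c$. Your description of how the exponential-growth case handles that last step is also in the right spirit: the paper shows $\inf\{\tau_{p_c}(x,y) : d(x,y)\le n\} \le \mathrm{gr}(G)^{-n}$ by bounding $\kappa_p(n)\,|B(\rho,n)|$ by the susceptibility $\sum_x \tau_p(\rho,x)$ for $p<p_c$ (finite by Aizenman--Barsky/Antunovi\'c--Veseli\'c), upgrading to $p=p_c$ via supermultiplicativity, Fekete's lemma, and left-continuity of $\sup_n \kappa_p(n)^{1/n}$, and then noting that a unique infinite cluster would force $\tau_{p_c}$ to be bounded below via Harris's inequality. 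Your identification of the genuine obstruction --- polynomial-growth graphs outside $\Z^2$ and high-dimensional lattices, e.g.\ $\Z^d$ for $3\le d\le 10$ --- is accurate, and is precisely why the conjecture is still open; one minor correction is that the BLPS result for nonamenable unimodular graphs already gives the full conclusion $\theta(p_c)=0$ on its own, without needing the non-uniqueness input.
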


Benjamini, Lyons, Peres, and Schramm \cite{benjamini1999percolation,benjamini1999critical} verified the conjecture for \emph{nonamenable}, \emph{unimodular}, quasi-transitive graphs, while partial progress has been made for \emph{nonunimodular}, quasi-transitive graphs (which are always nonamenable \cite[Exercise 8.30]{LP:book}) by Tim{\'a}r 
 \cite{timar2006percolation} and by Peres, Pete, and Scolnicov \cite{peres2006critical}.
In this note, we verify the conjecture for all quasi-transitive graphs of exponential growth.

\begin{thm}\label{thm:main}
Let $G$ be a quasi-transitive graph with exponential growth. Then $G[p_c]$ has no infinite clusters almost surely. 
\end{thm}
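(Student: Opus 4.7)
My plan is to prove the apparently weaker statement that $G[p_c]$ does not have a unique infinite cluster almost surely, and then invoke existing results to upgrade this to the absence of any infinite cluster. If $G$ is amenable, the Burton--Keane theorem gives that $G[p_c]$ has at most one infinite cluster almost surely, so non-uniqueness immediately rules out any infinite cluster. If $G$ is nonamenable and unimodular, the conclusion $\theta(p_c)=0$ is the theorem of Benjamini, Lyons, Peres and Schramm cited in the introduction, which needs no growth hypothesis. In the remaining nonamenable nonunimodular case, the partial progress of Tim\'ar and of Peres--Pete--Scolnicov closes the loop once non-uniqueness is known, using the Burton--Keane-type zero-one law on the number of infinite clusters.

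\textbf{Main technical step.} I would suppose for contradiction that $G[p_c]$ has a unique infinite cluster with positive probability, hence almost surely by ergodicity; in particular $\theta(p_c):=\bbP_{p_c}(o\leftrightarrow\infty)>0$. To derive a contradiction from the exponential volume growth of $G$, I would employ a ghost-field / magnetization argument in the spirit of Aizenman--Barsky and Duminil-Copin--Tassion. Introduce an independent ``green'' vertex set in which each vertex is green with probability $1-e^{-h}$, and set
\[
M(p,h):=\bbP_{p,h}\bigl(\text{the cluster of }o\text{ contains a green vertex}\bigr).
\]
Uniqueness of the infinite cluster at $p_c$ forces $\liminf_{h\downarrow 0} M(p_c,h)\geq\theta(p_c)>0$, whereas differential inequalities bounding $\partial_h M$ in terms of finite cluster moments, combined with the exponential growth $|B_n(o)|\geq e^{cn}$, should force $M(p_c,0^+)=0$ and so give the desired contradiction.

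\textbf{Main obstacle.} The hardest case is that of amenable graphs of exponential growth, exemplified by the lamplighter group $\Z_2\wr\Z$: here $p_u=p_c$ so there is no gap to exploit via uniqueness monotonicity, and every earlier proof of non-uniqueness at $p_c$ I know of ultimately uses nonamenability in some form (via Schramm's argument, $L^2$-boundedness of WUSF, or a mass-transport obstruction). The new ingredient must therefore be genuinely quantitative and use exponential growth per se, rather than any (non)amenability constant. I expect the core technical difficulty to lie in running the magnetization argument uniformly across unimodular and nonunimodular regimes, and in extracting the contradiction from $\theta(p_c)>0$ using only the exponential lower bound on $|B_n(o)|$.
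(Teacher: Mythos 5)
Your reduction to non-uniqueness is exactly the one the paper uses: Burton--Keane in the amenable case, Benjamini--Lyons--Peres--Schramm in the nonamenable unimodular case, and Tim\'ar in the nonunimodular case, so that it suffices to rule out a unique infinite cluster at $p_c$. That part is correct.

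The main technical step, however, has a genuine gap. The ghost-field magnetization satisfies $\lim_{h\downarrow 0}M(p,h)=\theta(p)$ whether or not the infinite cluster is unique (a finite cluster contains a green vertex with probability tending to $0$, an infinite one almost surely), so uniqueness never actually enters the argument you sketch; the assertion that differential inequalities for $\partial_h M$ together with $|B(o,n)|\geq e^{cn}$ ``should force'' $M(p_c,0^+)=0$ is therefore precisely the assertion $\theta(p_c)=0$, i.e.\ the whole theorem, and no Aizenman--Barsky-type inequality is known to yield this directly (if one did, your case analysis would be superfluous). What is missing is a concrete mechanism by which uniqueness is contradicted. In the paper this goes through two-point functions: uniqueness plus Harris's inequality gives $\tau_{p_c}(x,y)\geq \P_{p_c}(x\leftrightarrow\infty)\P_{p_c}(y\leftrightarrow\infty)$, which is bounded below uniformly by quasi-transitivity; on the other hand, for $p<p_c$ sharpness of the phase transition (Aizenman--Barsky, Antunovi\'c--Veseli\'c) gives $\sum_x\tau_p(\rho,x)<\infty$, whence $\kappa_p(n)=\inf\{\tau_p(x,y):d(x,y)\leq n\}\leq \sum_x\tau_p(\rho,x)/|B(\rho,n)|$ decays exponentially; supermultiplicativity of $\kappa_p$ (Harris again) plus Fekete's lemma and the lower semicontinuity (left-continuity) of $p\mapsto\sup_{n\geq1}\kappa_p(n)^{1/n}$ then transfer the bound $\kappa_{p_c}(n)\leq \mathrm{gr}(G)^{-n}$ to the critical point itself, contradicting the uniform lower bound. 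Your instinct to invoke sharpness \`a la Aizenman--Barsky and Duminil-Copin--Tassion is on target, but it must be applied to connectivities strictly below $p_c$ and then pushed to $p_c$ by a left-continuity argument; your sketch has no substitute for that step, nor any point where exponential growth and uniqueness actually interact.
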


A corollary of \cref{thm:main} is that $p_c<1$ for all quasi-transitive graphs of exponential growth, a result originally due to Lyons \cite{lyons1995random}.

We prove \cref{thm:main} by combining 
 the works of Benjamini, Lyons, Peres, and Schramm~\cite{benjamini1999critical} and Tim{\'a}r \cite{timar2006percolation} with the following simple connectivity decay estimate.
 Given a graph $G$, we write $B(x,r)$ to denote the graph distance ball of radius $r$ about a vertex $x$ of $G$. Recall that a graph $G$ is said to have \textbf{exponential growth} if
\[\mathrm{gr}(G) := \liminf_{r\to\infty} |B(x,r)|^{1/r}\]
is strictly greater than $1$ whenever $x$ is a vertex of $G$. It is easily seen that $\mathrm{gr}(G)$ does not depend on the choice of $x$. 
Let $\tau_p(x,y)$ be the probability that $x$ and $y$ are connected in $G[p]$, and let
$\kappa_{p}(n):= \inf\left\{\tau_{p}(x,y) : x, y \in V,\, d(x,y) \leq n \right\}$.


\begin{thm}\label{thm:pcconnectivitydecay}
Let $G$ be a quasi-transitive graph with exponential growth. Then
\[\kappa_{p_c}(n):= \inf\left\{\tau_{p_c}(x,y) : x, y \in V,\, d(x,y) \leq n \right\}\leq \mathrm{gr}(G)^{-n}\]
for all $n\geq1$.
\end{thm}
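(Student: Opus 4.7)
The plan is to establish a ``correct-exponent, wrong-prefactor'' subcritical bound $\kappa_p(r) \leq \chi(p)\, \mathrm{gr}(G)^{-r}$ by elementary averaging, amplify it to $\kappa_p(n) \leq \mathrm{gr}(G)^{-n}$ using a supermultiplicativity property of $\kappa_p$, and then pass to the limit $p \uparrow p_c$.

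Two structural facts feed into this. First, the Harris-FKG inequality makes $\kappa_p$ supermultiplicative: given $x, y \in V$ with $d(x, y) \leq n + m$, pick $z$ on a geodesic from $x$ to $y$ with $d(x, z) \leq n$ and $d(z, y) \leq m$; since $\{x \leftrightarrow z\}$ and $\{z \leftrightarrow y\}$ are both increasing,
\[
\tau_p(x, y) \geq \tau_p(x, z)\, \tau_p(z, y) \geq \kappa_p(n)\, \kappa_p(m),
\]
so $\kappa_p(n + m) \geq \kappa_p(n)\, \kappa_p(m)$ and in particular $\kappa_p(Nn) \geq \kappa_p(n)^N$ for every $N \geq 1$. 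Second, quasi-transitivity together with the triangle inequality makes $\bar b(r) := \max_{x} |B(x, r)|$ (a maximum over finitely many orbit representatives) submultiplicative, so by Fekete's lemma $\bar b(r) \geq \mathrm{gr}(G)^{r}$ for every $r \geq 1$.

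For $p < p_c$, sharpness of the phase transition on quasi-transitive graphs (Aizenman--Barsky--Menshikov; Duminil-Copin--Tassion) gives $\chi(p) := \sup_{x} \sum_{y} \tau_p(x, y) < \infty$. For any $x$ and $r$,
\[
\chi(p) \geq \sum_{y \in B(x, r)} \tau_p(x, y) \geq |B(x, r)|\, \kappa_p(r),
\]
so choosing $x$ with $|B(x, r)| \geq \mathrm{gr}(G)^{r}$ yields $\kappa_p(r) \leq \chi(p)\, \mathrm{gr}(G)^{-r}$. Combining with supermultiplicativity at $r = Nn$,
\[
\kappa_p(n)^{N} \leq \kappa_p(Nn) \leq \chi(p)\, \mathrm{gr}(G)^{-Nn},
\]
so $\kappa_p(n) \leq \chi(p)^{1/N}\, \mathrm{gr}(G)^{-n}$; sending $N \to \infty$ kills the (finite) prefactor and gives $\kappa_p(n) \leq \mathrm{gr}(G)^{-n}$ for every $p < p_c$.

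To obtain the bound at $p = p_c$, note that by quasi-transitivity $\kappa_p(n)$ is a minimum of finitely many functions $\tau_p(x, y)$, each of which is left-continuous in $p$ (by monotone convergence in the standard uniform coupling, as a.s.\ every finite open path at $p$ is already open at some $p' < p$), so $\kappa_p(n) \to \kappa_{p_c}(n)$ as $p \uparrow p_c$ and the theorem follows. I expect no substantial obstacle: the only non-trivial external input is sharpness of the phase transition, and the conceptual heart of the argument is the amplification above, which converts the naive averaging bound — whose prefactor $\chi(p)$ may blow up as $p \uparrow p_c$ — into a clean bound by extracting an $N$-th root.
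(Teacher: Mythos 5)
Your proof is correct and follows essentially the same route as the paper: Harris-inequality supermultiplicativity of $\kappa_p$, finiteness of the susceptibility below $p_c$ averaged over a ball, a Fekete-type amplification to kill the $p$-dependent prefactor, and left-continuity in $p$ at $p_c$ using that quasi-transitivity reduces $\kappa_p(n)$ to a minimum over finitely many orbit-representative pairs. The only cosmetic differences are that the paper applies Fekete's lemma directly to the supermultiplicative sequence $\kappa_p(n)$ with a fixed root vertex (rather than running Fekete on the maximal ball volume and then extracting $N$-th roots by hand), and it gets left-continuity of $\tau_p(x,y)$ by writing it as a supremum of the continuous functions $\tau_p^r(x,y)$ rather than via the monotone coupling.
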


\begin{remarkb}
The upper bound on $\kappa_{p_c}(n)$ in \cref{thm:pcconnectivitydecay} is attained when $G$ is a regular tree. 
\end{remarkb}

Following the work of Lyons, Peres, and Schramm \cite[Theorem 1.1]{LPS06}, \cref{thm:main} has the following immediate corollary, which is new in the amenable case. The reader is referred to \cite{LPS06} and \cite{LP:book} for background on minimal spanning forests.

\begin{corollary}
Let $G$ be a unimodular quasi-transitive graph of exponential growth. Then every component of the wired minimal spanning forest of $G$ is one-ended almost surely.
\end{corollary}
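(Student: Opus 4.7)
The plan is to combine \cref{thm:main} with Theorem~1.1 of Lyons--Peres--Schramm \cite{LPS06} essentially as a black box. The main theorem of the paper tells us that on any quasi-transitive graph $G$ of exponential growth (in particular, on the unimodular quasi-transitive graph $G$ of the corollary) critical percolation has no infinite clusters almost surely, so $\theta(p_c)=0$. Theorem~1.1 of \cite{LPS06} provides, for unimodular quasi-transitive graphs, the implication that $\theta(p_c)=0$ forces every component of the wired minimal spanning forest to be one-ended almost surely. Chaining these two statements together yields the corollary.

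In slightly more detail, the first step is to verify the hypotheses of the cited Lyons--Peres--Schramm theorem: the graph $G$ is unimodular and quasi-transitive by assumption, and the remaining hypothesis of critical nonpercolation is supplied by \cref{thm:main}. The second step is to invoke the LPS06 result and read off the conclusion. Heuristically, the mechanism underlying \cite[Theorem~1.1]{LPS06} is that extra ends of WMSF components correspond, via the standard coupling of the WMSF with percolation through i.i.d.\ edge-labels, to infinite clusters in critical (or near-critical) percolation; the absence of the latter therefore rules out the former.

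The main obstacle would be any gap between the hypotheses of \cite[Theorem~1.1]{LPS06} and those of the corollary. However, since the statement of the corollary explicitly assumes unimodularity and quasi-transitivity, and \cref{thm:main} supplies $\theta(p_c)=0$ with no further assumptions beyond exponential growth, there is no such gap, and the entire mathematical content is already present in \cref{thm:main}. The corollary is therefore genuinely immediate once both results are available, and the amenable case---previously open because $\theta(p_c)=0$ was not known in that setting---is now covered.
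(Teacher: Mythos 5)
Your proposal is correct and coincides with the paper's own (implicit) argument: the corollary is stated as an immediate consequence of \cref{thm:main} combined with Theorem~1.1 of Lyons--Peres--Schramm, exactly as you chain them. Nothing further is needed.
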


\section{Proof}

\begin{proof}[Proof of \cref{thm:main} given \cref{thm:pcconnectivitydecay}]
Let us recall the following results: 
\begin{thmb}[Newman and Schulman {\cite{newman1981infinite}}]
Let $G$ be a quasi-transitive graph. Then $G[p]$  has either no infinite clusters, a unique infinite cluster, or infinitely many infinite clusters almost surely for every $p \in [0,1]$. 
\end{thmb}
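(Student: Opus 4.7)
The plan is to prove this zero-one-infinity law by combining ergodicity with Bernoulli percolation's tolerance properties: because $\mu_p$ is a product measure, we can freely force the configuration on any finite edge set to be all-open or all-closed while retaining a positive-probability event on the complement.

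First I would show that the number of infinite clusters $N$ is almost surely constant. The function $\omega \mapsto N(\omega)$ is $\mathrm{Aut}(G)$-invariant on $\{0,1\}^E$, so it suffices to verify that $\mu_p$ is ergodic under $\mathrm{Aut}(G)$. In fact $\mu_p$ is mixing on cylindrical events: for events $A, B$ depending on edges in finite sets $F_A, F_B$, choose $\gamma_n \in \mathrm{Aut}(G)$ with $d(o, \gamma_n o) \to \infty$ (such $\gamma_n$ exist because $G$ is infinite and quasi-transitive, hence has an infinite orbit); then $\gamma_n F_A \cap F_B = \emptyset$ eventually, so $\gamma_n A$ and $B$ become independent and $\mu_p(\gamma_n A \cap B) = \mu_p(A)\mu_p(B)$ for $n$ large. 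Hence $N$ is a.s.\ equal to some $k \in \{0, 1, 2, \ldots, \infty\}$.

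Next I would rule out finite $k \geq 2$. Fix $p \in (0,1)$ (the boundary cases are trivial), let $B_r = B(o,r)$, let $E_r$ be the edges incident to $B_r$, and set $\omega^{\mathrm{out}} := \omega|_{E \setminus E_r}$. The key subclaim is that for $r$ large the event
\[A_r := \{\omega^{\mathrm{out}} \text{ has at least two distinct infinite clusters touching } B_r\}\]
has positive probability. Given $k \geq 2$ infinite clusters $C_1, \ldots, C_k$ of $\omega$, fix vertices $x_i \in C_i$; then $B_r \cap C_i \neq \emptyset$ for all $i$ once $r > \max_i d(o, x_i)$. Since $C_i$ is infinite, $B_r$ is finite, and $G$ is locally finite, $C_i \setminus B_r$ is a union of finitely many $\omega^{\mathrm{out}}$-clusters (each connected to $B_r$ through one of the finitely many boundary edges of $B_r$), at least one of which must be infinite; this infinite $\omega^{\mathrm{out}}$-cluster touches $B_r$. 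As the $C_i$ are disjoint, these infinite clusters are distinct, so $\mathbb{P}(A_r) \to 1$.

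Finally, $A_r$ depends only on $\omega^{\mathrm{out}}$, and conditional on $\omega^{\mathrm{out}}$ the restriction $\omega|_{E_r}$ is independent Bernoulli$(p)$. Hence both $A_r \cap \{\omega|_{E_r} \equiv 0\}$ and $A_r \cap \{\omega|_{E_r} \equiv 1\}$ have positive probability. On $A_r$, let $m \geq 2$ be the number of infinite $\omega^{\mathrm{out}}$-clusters touching $B_r$ and $\ell$ those not touching. On the all-closed event, the vertices of $B_r$ become singletons in $\omega$ and the infinite clusters of $\omega$ coincide with those of $\omega^{\mathrm{out}}$, giving $N(\omega) = m + \ell$. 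On the all-open event, $B_r$ is fully connected and absorbs all $m$ touching infinite clusters into a single infinite cluster, giving $N(\omega) = 1 + \ell$. Almost-sure constancy $N = k$ then forces $m + \ell = k = 1 + \ell$, i.e.\ $m = 1$, contradicting $m \geq 2$. The main obstacle is the combinatorial bookkeeping in this last step: specifically the fact that removing the finite edge set $E_r$ fragments each infinite $\omega$-cluster into at most finitely many $\omega^{\mathrm{out}}$-clusters, at least one of which remains infinite.
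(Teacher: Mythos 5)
The paper does not prove this statement---it quotes it as a known theorem of Newman and Schulman---so there is no internal proof to compare against; your argument is the standard one from the cited literature (ergodicity of the product measure under an automorphism group with an infinite orbit, plus insertion/deletion tolerance on a finite edge set to rule out $2\le N<\infty$), and it is correct, including the bookkeeping showing each infinite cluster of $\omega$ leaves an infinite $\omega^{\mathrm{out}}$-cluster adjacent to $B_r$. The one step worth tightening is the conclusion: positive probability of $A_r\cap\{\omega|_{E_r}\equiv 0\}$ and of $A_r\cap\{\omega|_{E_r}\equiv 1\}$ \emph{separately} only yields $m+\ell=k$ and $1+\ell=k$ on possibly different configurations; instead, use the conditional independence you state to argue that for almost every $\omega^{\mathrm{out}}\in A_r$ both forced completions have positive conditional probability and hence both must give $N=k$, so the two equalities hold simultaneously for the same $\omega^{\mathrm{out}}$ and force $m=1$, contradicting $m\ge 2$.
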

\begin{thmb}[Burton and Keane {\cite{burton1989density}}, Gandolfi, Keane, and Newman {\cite{gandolfi1992uniqueness}}]
Let $G$ be an amenable quasi-transitive graph. Then $G[p]$ has at most one infinite cluster almost surely for every $p\in[0,1]$. 
\end{thmb}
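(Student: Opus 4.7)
The plan is to combine the Newman--Schulman trichotomy stated just above with the Burton--Keane trifurcation counting argument. By Newman--Schulman, for each fixed $p \in [0,1]$ the number of infinite clusters in $G[p]$ is almost surely $0$, $1$, or $\infty$, so it suffices to rule out the case of infinitely many infinite clusters. Assume for contradiction that some $p$ produces infinitely many infinite clusters almost surely.

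Call a vertex $x$ a \emph{trifurcation} if $x$ lies in an infinite cluster $K$ and deleting $x$ together with its incident edges splits $K$ into at least three infinite pieces. The first step is to show that some fixed vertex $x_0$ is a trifurcation with positive probability. Fix $x_0 \in V$; since there are almost surely infinitely many infinite clusters, for $r$ large enough the ball $B(x_0,r)$ meets at least three distinct infinite clusters with positive probability. Inside $B(x_0,r)$ one can select a tree of edges joining $x_0$ to one representative of each of these three clusters through otherwise vertex-disjoint paths. By the finite-energy property of Bernoulli percolation, I would resample the status of the (finitely many) edges in $B(x_0,r)$ so that precisely this tree is open and the remaining edges incident to $x_0$ are closed, while the three clusters still extend to infinity outside $B(x_0,r)$; the resulting event has positive probability and forces $x_0$ to be a trifurcation. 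Quasi-transitivity then gives a uniform constant $c > 0$ such that every vertex is a trifurcation with probability at least $c$ (take the minimum over the finitely many $\mathrm{Aut}(G)$-orbits).

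The second step is the geometric Burton--Keane bound: for any finite $F \subset V$, the number of trifurcations inside $F$ never exceeds $|\partial_V F|$, where $\partial_V F := \{y \in V \setminus F : y \sim F\}$. I would prove this by assigning to each trifurcation $x \in F$ the unordered triple of distinct boundary vertices obtained by following its three infinite branches until they first exit $F$, and then showing by induction on the number of trifurcations that the resulting map is injective (the key observation being that any trifurcation lies on at most one of the three paths emanating from any other trifurcation). Taking expectations gives
\[
c \, |F| \;\leq\; \mathbb{E}\bigl|\{\text{trifurcations in } F\}\bigr| \;\leq\; |\partial_V F|.
\]
Invoking amenability of $G$, I would choose a F\o{}lner sequence $(F_n)$ with $|\partial_V F_n|/|F_n| \to 0$; dividing the displayed inequality by $|F_n|$ and letting $n \to \infty$ yields $c \leq 0$, a contradiction. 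The main technical obstacle is the combinatorial trifurcation-to-triple injection, which is the heart of Burton--Keane; it is a purely local argument and transfers from the transitive to the quasi-transitive setting without modification, so no new ideas beyond the original proof are needed.
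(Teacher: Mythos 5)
The paper does not prove this statement at all: it is imported as a black box, with the reader referred to \cite{burton1989density}, \cite{gandolfi1992uniqueness}, and \cite{LP:book}. So your proposal cannot be compared to an internal argument; what you have written is essentially the standard Burton--Keane trifurcation proof from the cited literature, and the overall architecture (Newman--Schulman trichotomy, finite-energy construction of trifurcations, a deterministic boundary bound, a F\o{}lner sequence) is the correct and standard one. The quasi-transitive adaptation via the minimum of the trifurcation probability over the finitely many orbits is also right. Two small caveats you should make explicit: the insertion/deletion argument needs $p\in(0,1)$ (the endpoints are trivial, since $G[0]$ has no infinite clusters and $G[1]=G$ is connected), and the three boundary representatives must be chosen so that each is connected to infinity \emph{off} $B(x_0,r)$, since closing edges inside the ball could otherwise disconnect a cluster that only reaches infinity through the ball. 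You gesture at the latter, so I read it as understood.

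The one genuine gap is in your justification of the key deterministic bound. You claim that the number of trifurcations in $F$ is at most $|\partial_V F|$ because the map sending a trifurcation to an unordered triple of boundary vertices is \emph{injective}. Injectivity into the $3$-subsets of $\partial_V F$ only gives the bound $\binom{|\partial_V F|}{3}$, which is cubic in $|\partial_V F|$ and is \emph{not} sufficient: along a F\o{}lner sequence one only knows $|\partial_V F_n|/|F_n|\to 0$, and $|\partial_V F_n|^3/|F_n|$ can perfectly well tend to infinity on an amenable graph (e.g.\ if $|\partial_V F_n|\asymp |F_n|^{4/5}$). What Burton and Keane actually prove is stronger than injectivity: the chosen triples form a \emph{compatible} family (any two triples either are disjoint or split across a common partition induced by the branch structure), and a compatible family of $3$-subsets of a set $Y$ has size at most $|Y|-2$. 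Equivalently, the union of the selected branch segments is a forest in which each trifurcation has degree at least $3$ and every leaf lies in $\partial_V F$, and in a finite forest the number of vertices of degree $\geq 3$ is less than the number of leaves. Replacing your injectivity claim with this compatible-family (or leaf-counting) lemma closes the gap and recovers the linear bound $c\,|F|\leq \E\bigl|\{\text{trifurcations in } F\}\bigr|\leq |\partial_V F|$ that the amenability step requires.
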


\begin{thmb}[Benjamini, Lyons, Peres, and Schramm {\cite{benjamini1999percolation,benjamini1999critical}}]
Let $G$ be a nonamenable, unimodular, quasi-transitive graph. Then $G[p_c]$ has no infinite clusters almost surely.
\end{thmb}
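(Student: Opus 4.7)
The plan is to use the Newman--Schulman trichotomy recalled above to reduce the statement to ruling out, separately, a unique infinite cluster and infinitely many infinite clusters at $p=p_c$. The uniqueness case falls quickly to the Harris--FKG inequality together with Theorem~\ref{thm:pcconnectivitydecay}; the non-uniqueness case is the technical core and is where unimodularity enters through the mass transport principle.

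For the uniqueness case, I would suppose for contradiction that $G[p_c]$ has a unique infinite cluster almost surely, and write $\theta_{p_c}(v):=\P(v\leftrightarrow\infty\text{ in }G[p_c])$. Under uniqueness the event $\{x\leftrightarrow\infty\}\cap\{y\leftrightarrow\infty\}$ is contained in $\{x\leftrightarrow y\}$ almost surely, so Harris--FKG applied to these two increasing events yields
\[\tau_{p_c}(x,y)\;\geq\;\P(x\leftrightarrow\infty,\ y\leftrightarrow\infty)\;\geq\;\theta_{p_c}(x)\,\theta_{p_c}(y).\]
By quasi-transitivity the function $\theta_{p_c}$ is constant on each of the finitely many orbits of $\mathrm{Aut}(G)$; a short FKG argument (opening a deterministic finite path between two vertices) shows that $\theta_{p_c}$ is either identically zero or strictly positive on every orbit, and uniqueness forces the latter. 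Hence $\theta_*:=\min_v\theta_{p_c}(v)>0$ and $\kappa_{p_c}(n)\geq\theta_*^2$ uniformly in $n$, contradicting $\kappa_{p_c}(n)\leq \mathrm{gr}(G)^{-n}\to 0$ from Theorem~\ref{thm:pcconnectivitydecay}.

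For the non-uniqueness case, the plan is to adapt the Burton--Keane trifurcation argument to the nonamenable unimodular setting using mass transport. Assuming there are infinitely many infinite clusters at $p_c$, insertion tolerance of Bernoulli percolation would produce a positive density of \textit{trifurcation vertices}: vertices lying in an infinite cluster whose removal together with a local modification of the surrounding edges splits the cluster into at least three distinct infinite pieces. A diagonally invariant mass transport sending unit mass from each vertex in an infinite cluster to a chosen nearby trifurcation then has, by unimodularity, matching expected out- and in-mass per orbit representative; the Lyons--Schramm indistinguishability theorem can be invoked to ensure every infinite cluster carries trifurcations with positive frequency, and nonamenability converts the mass balance into a contradiction, since on a nonamenable graph the expected number of trifurcations ``close to'' a fixed vertex can be bounded in two incompatible ways. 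The main obstacle is precisely this second step: Theorem~\ref{thm:pcconnectivitydecay} controls only an \emph{infimum} of $\tau_{p_c}$ and is a priori consistent with the coexistence of infinitely many infinite clusters at $p_c$, so closing the gap requires the full mass transport machinery specific to nonamenable unimodular quasi-transitive graphs, together with careful orbit-by-orbit bookkeeping in place of the transitive-case simplifications.
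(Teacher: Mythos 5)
First, a framing point: the paper does not prove this statement at all. It is imported verbatim from Benjamini--Lyons--Peres--Schramm and used as a black box in the deduction of Theorem~\ref{thm:main}, so there is no in-paper proof to compare against. Judged on its own terms, the uniqueness half of your proposal is fine: a nonamenable bounded-degree quasi-transitive graph has exponential growth, so Theorem~\ref{thm:pcconnectivitydecay} applies, and your Harris--FKG argument giving $\kappa_{p_c}(n)\geq\theta_*^2>0$ under uniqueness is precisely the argument the paper itself uses to prove Theorem~\ref{thm:main}; there is no circularity, since Theorem~\ref{thm:pcconnectivitydecay} rests only on Aizenman--Barsky sharpness, supermultiplicativity, and Fekete's lemma.

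The genuine gap is in the case of infinitely many infinite clusters, which is the entire substance of the BLPS theorem. Your plan --- produce a positive density of trifurcation points by insertion tolerance and then let ``nonamenability convert the mass balance into a contradiction'' --- has the logic of Burton--Keane exactly backwards and cannot work. Burton--Keane derives a contradiction from a positive density of trifurcations only on \emph{amenable} graphs, where the trifurcation count in a F\o lner set (proportional to volume) must be dominated by the boundary (negligible relative to volume). On a nonamenable unimodular graph a positive density of trifurcations is perfectly consistent with the mass-transport principle; indeed, infinitely many infinite clusters genuinely coexist for all $p\in(p_c,p_u)$ on, say, the product of a regular tree with $\Z$, so no argument of this shape can rule out non-uniqueness per se --- it can only be ruled out \emph{at} $p_c$. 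The actual BLPS mechanism is different: assuming infinitely many infinite clusters at $p_c$, the mass-transport principle shows each cluster has infinitely many ends and contains trifurcations with positive density, and from this one extracts invariantly-defined subforests of the clusters whose branching is large enough that an independent $p/p_c$-thinning of $G[p_c]$ (which is distributed as $G[p]$) still contains infinite clusters for some $p<p_c$. The contradiction is with the minimality of $p_c$, not with nonamenability directly. That ``survival under further thinning'' step is the key idea and is absent from your outline; the appeal to Lyons--Schramm indistinguishability is also anachronistic and unnecessary for this purpose.
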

\begin{thmb}[Tim{\'a}r {\cite{timar2006percolation}}]
 Let $G$ be a nonunimodular, quasi-transitive graph. Then $G[p_c]$ has at most one infinite cluster almost surely.
\end{thmb}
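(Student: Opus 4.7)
The plan is to argue by contradiction: by the Newman--Schulman trichotomy (already cited in the proof), if the conclusion fails then $G[p_c]$ has infinitely many infinite clusters almost surely, and I will derive a contradiction from this hypothesis using the mass transport principle in its nonunimodular form.

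First, I will set up the nonunimodular framework. Fix a root $o \in V$ and let $\Delta: V \to (0,\infty)$ be the modular function relative to $o$, defined on the orbit of $o$ by $\Delta(v) = |\mathrm{Stab}(o)\cdot v| / |\mathrm{Stab}(v) \cdot o|$ and extended to the remaining orbits in the standard way. The nonunimodularity hypothesis is equivalent to $\Delta$ being unbounded above (and below). The identity I will exploit is
\[
\E \sum_{v \in V} F(o, v, \omega) \;=\; \E \sum_{v \in V} F(v, o, \omega)\, \Delta(v),
\]
valid for every nonnegative, diagonally $\mathrm{Aut}(G)$-invariant function $F$ of two vertices and the percolation configuration $\omega$.

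Second, assuming infinitely many infinite clusters at $p_c$, I will employ the standard ``encounter vertex'' construction in the Burton--Keane tradition. By the finite energy property, any pair of infinite clusters can be joined by rerouting through a bounded neighborhood of a vertex; this yields a positive-density set of vertices near which several distinct infinite clusters simultaneously pass. I would then design an $\mathrm{Aut}(G)$-equivariant transport rule sending unit mass from each such encounter vertex to a canonical representative in each nearby infinite cluster, with the choice of representative biased so as to push the modular weight $\Delta$ toward its unbounded direction. Applying the identity to this $F$, the left-hand side should be finite (each vertex sends out only a bounded amount of mass), whereas the right-hand side would be infinite because of the unboundedness of $\Delta$ along the representative directions, yielding the desired contradiction.

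The main obstacle is the construction of the representative rule: it must be simultaneously canonical, $\mathrm{Aut}(G)$-equivariant, measurable with respect to the percolation configuration, and must interact with the modular tilt in a way that forces a genuinely quantitative divergence rather than merely a finite reweighting. Tim\'ar's key insight is that the correct transport is guided by \emph{pivotal edges}, namely those whose insertion would merge two distinct infinite clusters; these pivotals organize the local multi-cluster geometry in a canonical fashion and couple the finite-energy modification directly to the modular function. Once the correct rule is identified, verifying integrability on one side and divergence on the other is technical but relatively routine, and passing from the transitive to the quasi-transitive setting amounts to summing over the finitely many orbits of $\mathrm{Aut}(G)$ on $V$.
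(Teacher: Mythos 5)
This statement is not proved in the paper at all: it is one of four results recalled with attribution inside the proof of Theorem 1, and the paper's ``proof'' is simply the citation to Tim\'ar's article. So the only question is whether your blind attempt stands on its own, and it does not.

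The decisive objection is that your outline never uses $p=p_c$. Every ingredient you invoke --- the Newman--Schulman reduction to infinitely many infinite clusters, insertion tolerance and encounter/trifurcation vertices, the tilted mass-transport identity, and the unboundedness of the modular function $\Delta$ --- is available verbatim at every $p$ in the nonuniqueness phase. Nonunimodular quasi-transitive graphs are nonamenable, and on such graphs there can genuinely be a nonempty interval of $p$ with infinitely many infinite clusters (this is exactly the regime $p_c<p<p_u$); there the clusters have trifurcations with positive density and the tilted MTP holds, yet no contradiction arises. Consequently, an argument of the shape you describe cannot close: if it did, it would rule out infinitely many infinite clusters at \emph{all} $p$, which is false. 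Criticality has to enter in an essential, quantitative way, and it is entirely absent from your sketch. Relatedly, your claim that the right-hand side of the transport identity ``would be infinite because of the unboundedness of $\Delta$'' is unsubstantiated: unboundedness of $\Delta$ on $V$ does not make $\E \sum_v F(v,o,\omega)\Delta(v)$ diverge unless the transport actually receives non-negligible mass from vertices with large $\Delta$, and nothing in the construction guarantees this.

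Beyond that structural flaw, the proposal explicitly defers its own core: you acknowledge that the ``main obstacle is the construction of the representative rule'' and then assert that once it is identified the rest is routine. That construction, together with the verification that mass sent out is bounded while mass received (weighted by $\Delta$) diverges, \emph{is} the theorem; Tim\'ar's actual argument rests on a careful analysis of how infinite clusters interact with the level sets of $\Delta$ (which clusters are ``heavy'' or ``light'' with respect to levels) and on properties special to $p_c$, none of which is reproduced here. As written, the attempt is a plausible opening paragraph for a proof, not a proof, and the specific route it gestures at cannot succeed without a mechanism that distinguishes $p_c$ from supercritical values of $p$.
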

The statements given for the first two theorems above are not those given in the original papers; 
 the reader is referred to \cite{LP:book} for a modern account of these theorems and for the definitions of unimodularity and amenability.  For our purposes, the significance of the above theorems is that, to prove \cref{thm:main}, it suffices to prove that if $G$ is a quasi-transitive graph of exponential growth, then $G[p_c]$ does not have a unique infinite cluster almost surely.
This follows immediately from \cref{thm:pcconnectivitydecay}, 
 since if $G[p_c]$ contains a unique infinite cluster then
\begin{align*}\tau_{p_c}(x,y) &\geq \P_{p_c}(x \text{ and } y \text{ are both in the unique infinite cluster}) \\
&\geq \P_{p_c}(x \text{ is in the unique infinite cluster})\P_{p_c}(y \text{ is in the unique infinite cluster}) \end{align*}
for all $x,y \in V$ by Harris's inequality \cite{harris1960lower}. Quasi-transitivity implies that the right hand side is bounded away from zero if $G[p_c]$ contains a unique infinite cluster almost surely, and consequently that $\lim_{n\to\infty}\kappa_{p_c}(n)>0$ in this case. 
\end{proof}

\begin{lem}\label{lem:superkappa} Let $G$ be any graph. Then $\kappa_p(n)$ is a supermultiplicative function of $n$. That is, for every $p,n$ and $m$, we have that $\kappa_p(m +n) \geq \kappa_p(m)\kappa_p(n)$.
\end{lem}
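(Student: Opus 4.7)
The plan is to reduce supermultiplicativity of $\kappa_p$ to two standard facts: the existence of geodesic intermediate points, and Harris's (FKG) inequality for increasing events in Bernoulli percolation.

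First I would fix $p$, $m$, $n$, and an arbitrary pair $x,y \in V$ with $d(x,y) \leq m+n$. The goal is to produce the lower bound $\tau_p(x,y) \geq \kappa_p(m)\kappa_p(n)$, since taking the infimum over such pairs will then give the desired inequality $\kappa_p(m+n) \geq \kappa_p(m)\kappa_p(n)$. To produce the bound, I would choose a geodesic path from $x$ to $y$ and select a vertex $z$ on this path with $d(x,z) \leq m$ and $d(z,y) \leq n$; this is possible because $d(x,y) \leq m+n$ (explicitly, take $z$ to be the vertex on the geodesic at distance $\min(d(x,y),m)$ from $x$, noting that the complementary distance is at most $\max(0, d(x,y)-m) \leq n$).

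Next I would observe the deterministic inclusion of events
\[\{x \leftrightarrow z \text{ in } G[p]\} \cap \{z \leftrightarrow y \text{ in } G[p]\} \subseteq \{x \leftrightarrow y \text{ in } G[p]\},\]
since concatenating an open path from $x$ to $z$ with an open path from $z$ to $y$ produces an open path from $x$ to $y$. Both connection events are increasing in the configuration, so Harris's inequality gives
\[\tau_p(x,y) \geq \tau_p(x,z)\, \tau_p(z,y) \geq \kappa_p(m)\, \kappa_p(n),\]
where the last step applies the definition of $\kappa_p$ to each factor since $d(x,z)\leq m$ and $d(z,y)\leq n$. Taking the infimum over all admissible $x,y$ yields the lemma.

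There is essentially no obstacle here: no use of quasi-transitivity, exponential growth, or any property beyond connectedness and local finiteness is required, which is why the statement holds for any graph $G$. The only step one might worry about is the existence of an intermediate point $z$ with the stated distance bounds, but this is automatic from the existence of a geodesic between $x$ and $y$ in a connected graph.
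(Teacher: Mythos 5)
Your proof is correct and follows the paper's argument essentially verbatim: choose an intermediate vertex on a geodesic splitting the distance into pieces of length at most $m$ and $n$, apply Harris's inequality to the two increasing connection events, and take the infimum. The only difference is that you spell out explicitly how to find the intermediate vertex, which the paper leaves implicit.
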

\begin{proof}
Let $u$ and $v$ be two vertices with $d(u,v)\leq m+n$. Then there exists a vertex $w$ such that $d(u,w)\leq m$ and $d(w,v)\leq n$. Since the events $\{u \leftrightarrow w\}$ and $\{w \leftrightarrow v\}$  are increasing and 
$\{u\xleftrightarrow{} v\} \supseteq \{u \leftrightarrow w\} \cap \{w \leftrightarrow v\}$, Harris's inequality \cite{harris1960lower} implies that
\[\tau_p(u,v) \geq \tau_p(u,w)\tau_p(w,v) \geq \kappa_p(m)\kappa_p(n).\]
The claim follows by taking the infimum.   
\end{proof}

\begin{lem}\label{lem:leftcont} Let $G$ be a quasi-transitive graph. Then $\sup_{n\geq1} (\kappa_p(n))^{1/n}$ is left continuous in $p$. That is, 
\begin{equation}\label{eq:kappar} \adjustlimits\lim_{\eps\to0+}\sup_{n\geq1} \left(\kappa_{p-\eps}(n)\right)^{1/n}=\sup_{n\geq 1} \left(\kappa_p(n)\right)^{1/n} 
\quad \text{ for every $p\in(0,1]$.}
\end{equation}
\end{lem}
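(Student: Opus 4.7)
The plan is to reduce the claim to left-continuity of $\kappa_p(n)$ at each fixed scale $n$, and then pass to the supremum. Write $f(p) := \sup_{n\geq 1}(\kappa_p(n))^{1/n}$. By the standard monotone coupling, $\tau_p(x,y)$ is non-decreasing in $p$ for each $x,y$, so $\kappa_p(n)$ and $f(p)$ are too; thus $\lim_{\eps\to 0+}f(p-\eps)\leq f(p)$ is automatic, and only the reverse inequality needs work. Granting that $p\mapsto\kappa_p(n)$ is left-continuous for each fixed $n$, a standard sup-over-scales argument finishes the proof: given $\delta>0$, I choose $N$ with $(\kappa_p(N))^{1/N}>f(p)-\delta$, observe that
\[\liminf_{\eps\to 0+}f(p-\eps)\geq\lim_{\eps\to 0+}(\kappa_{p-\eps}(N))^{1/N}=(\kappa_p(N))^{1/N}>f(p)-\delta,\]
and send $\delta\to 0$.

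The crux is therefore left-continuity of $\kappa_p(n)$ at each fixed $n$, which I would obtain from two soft ingredients. First, each two-point function $\tau_p(x,y)$ is lower semi-continuous (in fact continuous) in $p$: for each $N$ the event that $x$ and $y$ are joined by an open path of length at most $N$ depends on only finitely many edges (local finiteness of $G$), so its probability is a polynomial in $p$; these events increase to $\{x\leftrightarrow y\}$ as $N\to\infty$, exhibiting $\tau_p(x,y)$ as a pointwise supremum of continuous functions of $p$. Second, by quasi-transitivity together with local finiteness, the infimum defining $\kappa_p(n)$ is actually a minimum over finitely many $\mathrm{Aut}(G)$-orbits of pairs: if $v_1,\ldots,v_k$ are orbit representatives for the action on $V$, then every pair $(x,y)$ with $d(x,y)\leq n$ lies in the orbit of some $(v_i,w)$ with $w\in B(v_i,n)$, and each of these balls is finite. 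Combining, $\kappa_p(n)$ is a minimum of finitely many monotone non-decreasing lsc functions, hence monotone non-decreasing and lsc itself, and such a function of one real variable is automatically left-continuous.

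I do not foresee a real obstacle: each ingredient is standard. The one point that genuinely requires care is the reduction of the infimum in the definition of $\kappa_p(n)$ to a minimum, which uses local finiteness (not merely quasi-transitivity) to ensure the balls $B(v_i,n)$ are finite; without this one would be taking the infimum of lsc functions over an infinite family, which need not be lsc.
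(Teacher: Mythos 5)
Your proof is correct and follows essentially the same route as the paper: lower semi-continuity of $\tau_p(x,y)$ as a supremum of continuous (polynomial) functions, quasi-transitivity plus local finiteness to turn the infimum defining $\kappa_p(n)$ into a finite minimum, and the fact that an increasing lower semi-continuous function is left continuous, with your explicit $\delta$-argument at the final supremum merely unpacking the paper's appeal to ``a supremum of lower semi-continuous functions is lower semi-continuous.'' The only quibble is your parenthetical claim that $\tau_p(x,y)$ is ``in fact continuous,'' which your argument does not justify (right continuity is a separate, subtler matter) and which is not needed anywhere.
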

\begin{proof}
Recall that an increasing function is left continuous if and only if it is lower semi-continuous, and that lower semi-continuity is preserved by taking minima (of finitely many functions) and suprema (of arbitrary collections of functions). 
Now, observe that $\tau_p(x,y)$ is lower semi-continuous in $p$ for each pair of fixed vertices $x$ and $y$: 
 This follows from the fact that $\tau_p(x,y)$ can be written as the supremum of the continuous functions $\tau_p^r(x,y)$, which give the probabilities that $x$ and $y$ are connected in $G[p]$ by a path of length at most $r$.
Since $G$ is quasi-transitive, there are only finitely many isomorphism classes of pairs of vertices at distance at most $n$ in $G$, and we deduce that $\kappa_p(n)$ is also lower semi-continuous in $p$ for each fixed $n$. 
Thus, $\sup_{n\geq1} (\kappa_p(n))^{1/n}$ is a supremum of lower semi-continuous functions and is therefore lower semi-continuous itself. 
\end{proof}

We will require the following well known theorem.

\begin{thm}\label{thm:susceptibility}
Let $G$ be a quasi-transitive graph, and let $\rho$ be a fixed vertex of $G$. Then the expected cluster size is finite for every $p<p_c$. That is,
\[\sum_x\tau_p(\rho,x)<\infty \quad \text{ for every $p<p_c$.}\]
\end{thm}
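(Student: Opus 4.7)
The plan is to establish the sharpness of the phase transition, from which finiteness (and indeed exponential decay) of $\chi_p(\rho) := \sum_x \tau_p(\rho,x)$ for every $p < p_c$ follows as a special case. I will follow the streamlined argument of Duminil-Copin and Tassion, which adapts cleanly from the transitive to the quasi-transitive setting.

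Fix orbit representatives $\rho_1,\dots,\rho_k$ under the action of $\mathrm{Aut}(G)$ on $V$. For each finite $S\subseteq V$ and each index $i$ with $\rho_i\in S$, define
\[\varphi_p^{(i)}(S) := p\sum_{(x,y)\in\Delta S}\P_p\bigl(\rho_i\xleftrightarrow{S}x\bigr),\]
where $\Delta S$ denotes the set of edges with exactly one endpoint in $S$ and $\xleftrightarrow{S}$ denotes connection by an open path staying in $S$. Define
\[\tilde p_c:=\sup\Bigl\{p\in[0,1]:\text{for every }i\text{ there exists finite }S\ni\rho_i\text{ with }\varphi_p^{(i)}(S)<1\Bigr\}.\]
The strategy is to show the two implications (a) if $p<\tilde p_c$ then $\chi_p(\rho)<\infty$, and (b) if $p>\tilde p_c$ then $\theta(p)>0$. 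Combined with the definition of $p_c$, these force $\tilde p_c=p_c$ and yield the theorem.

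For (a), pick for each $i$ a finite set $S_i\ni\rho_i$ with $c_i:=\varphi_p^{(i)}(S_i)<1$. A standard BK-inequality decomposition of $\{\rho_i\leftrightarrow v\}$ for $v\notin S_i$ according to the first edge by which the connecting path exits $S_i$ gives
\[\chi_p(\rho_i)\le|S_i|+c_i\cdot\max_{y}\chi_p(y),\]
where the maximum ranges over endpoints outside $S_i$ of edges in $\Delta S_i$. Since $\chi_p$ is constant on orbits, setting $M:=\max_i\chi_p(\rho_i)$ yields $M\le\max_i|S_i|+(\max_i c_i)\,M$, hence $M<\infty$. For (b), Russo's formula applied to $\theta_n^{(i)}(p):=\P_p(\rho_i\leftrightarrow\partial B(\rho_i,n))$ together with the assumption that some $i$ has $\varphi_p^{(i)}(S)\ge 1$ for every finite $S$ yields a differential inequality of the form $\theta_n'\ge\frac{n}{p(1-p)}\theta_n(1-\theta_n)$; integration over $[\tilde p_c,p]$ and passing $n\to\infty$ then forces $\theta(p)>0$.

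The main technical obstacle is handling the finitely many orbit classes uniformly when closing the recursion in (a) and verifying the differential inequality in (b); this is accomplished by using that $\chi_p$ and $\theta$ are orbit invariants and by choosing the sets $S_i$ compatibly across orbit representatives, but does not require any new ideas beyond those in the transitive case originally treated by Duminil-Copin and Tassion.
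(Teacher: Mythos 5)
The paper does not actually prove this theorem: it is imported from the literature, with the transitive case attributed to Aizenman--Barsky, the quasi-transitive case to Antunovi\'c--Veseli\'c, and Duminil-Copin--Tassion cited for a newer argument. You are therefore attempting to reprove the sharpness of the phase transition itself, and your sketch breaks down at exactly the point where that theorem is hard. In step (a), from $M\le \max_i|S_i|+(\max_i c_i)M$ with $\max_i c_i<1$ you conclude $M<\infty$. That inference is invalid: if $M=\infty$ the inequality holds trivially, so it only yields $M\le \max_i|S_i|/(1-\max_i c_i)$ once $M<\infty$ is already known. This circularity is not a technicality --- it is the whole difficulty, and the Simon--Lieb-type recursion you write down is classical precisely because it does not close by itself. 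The obvious repairs also fail: truncating to $\chi_p^{(n)}(v)=\sum_{x\in B(v,n)}\tau_p(v,x)$ gives a recursion of the form $\chi^{(n)}\le A+c\,\chi^{(n+L)}$ (re-rooting at a boundary vertex of $S_i$ enlarges, rather than shrinks, the target ball), while iterating the pointwise inequality only gives $\tau_p(\rho,x)\le c^{\lfloor d(\rho,x)/L\rfloor}$. On a graph of exponential growth --- the only case this paper cares about --- neither conclusion implies $\sum_x\tau_p(\rho,x)<\infty$ unless the decay rate beats $\mathrm{gr}(G)$, which nothing in your argument provides. For the same reason, even the genuine Duminil-Copin--Tassion Item 1 (exponential decay of $\P_p(\rho\leftrightarrow\partial B(\rho,n))$ for $p<\tilde p_c$) does not by itself yield finite susceptibility here; Aizenman--Barsky and Antunovi\'c--Veseli\'c obtain it by a different mechanism (differential inequalities for the magnetization with a ghost field), which is why they are the references given.

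Two smaller points. The differential inequality in step (b) is misstated: the correct form is $\theta_n'(p)\ge\frac{1}{p(1-p)}\inf\left\{\varphi_p^{(i)}(S):\rho_i\in S\subseteq B(\rho_i,n)\right\}\,\bigl(1-\theta_n(p)\bigr)$, with no factor of $n$ or of $\theta_n$; your version integrates to a bound involving $\theta_n(\tilde p_c)$, which you cannot bound below uniformly in $n$. The orbit bookkeeping (finitely many representatives, $\chi_p$ and $\theta$ constant on orbits) is fine and is indeed how the quasi-transitive extension goes; but as written the proposal assumes the finiteness it is supposed to prove.
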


 This theorem was proven in the transitive case by Aizenmann and Barsky~\cite{aizenman1987sharpness}, and in the quasi-transitive case by Antunovi{\'c} and Veseli{\'c} \cite{antunovic2008sharpness}; see also the recent work of Duminil-Copin and Tassion \cite{duminil2015new} for a beautiful new proof in the transitive case.

\begin{proof}[Proof of \cref{thm:pcconnectivitydecay}]
Let $\rho$ be a fixed root vertex of $G$. For every $p\in[0,1]$ and every $n\geq1$, we have
\begin{align*}
\kappa_p(n)\cdot \left|B(\rho,n)\right| \leq \sum_{x\in B(\rho,n)} \tau_{p}(\rho,x) \leq \sum_x\tau_{p}(\rho,x). \end{align*}
Thus, it follows from \cref{thm:susceptibility}, \cref{lem:superkappa}, and
Fekete's Lemma that
\[\sup_{n\geq1} (\kappa_p(n))^{1/n} = \lim_{n\to\infty} (\kappa_p(n))^{1/n} \leq \limsup_{n\to\infty} \left(\frac{\sum_x\tau_{p}(\rho,x)}{|B(\rho,n)|}\right)^{1/n} = \mathrm{gr}(G)^{-1}\] for every $p<p_c$. We conclude by applying \cref{lem:leftcont}.
\end{proof}

\subsection*{Acknowledgments}
This work was supported by a Microsoft Research PhD Fellowship and was carried out while the author was an intern at Microsoft Research, Redmond. We thank Ander Holroyd, Gady Kozma, Russ Lyons, and Asaf Nachmias for helpful comments and suggestions. 

\footnotesize{
  \bibliographystyle{abbrv}
  \bibliography{unimodular}
  }

\end{document}